\newtheorem{theorem}{Theorem}
\theoremstyle{plain}
\newtheorem{definition}{Definition}
\newtheorem{proposition}{Proposition}
\newtheorem{remark}{Remark}
\numberwithin{equation}{section}
\begin{document}
\title[Hurwitz function]{On connection between values of Riemann zeta
function at integers and generalized harmonic numbers }
\author{Pawe\l\ J. Szab\l owski}
\address{Department of Mathematics and Information Sciences,\\
Warsaw University of Technology\\
ul. Koszykowa 75, 00-662 Warsaw, Poland}
\email{pawel.szablowski@gmail.com}
\date{December, 2014}
\subjclass[2010]{Primary 11M35, 40G05; Secondary 05A15, 05E05}
\keywords{Riemann zeta function Hurwitz zeta function, Euler summation,
Harmonic numbers, generalized Harmonic numbers, Catalan constant. }

\begin{abstract}
Using Euler transformation of series we relate values of Hurwitz zeta
function at integer and rational values of arguments to certain rapidly
converging series where some generalized harmonic numbers appear. The form
of these generalized harmonic numbers carries information about the values
of the arguments of Hurwitz function. In particular we prove: $\forall k\in 
\mathbb{N}:$ $\zeta (k,1)\allowbreak =\allowbreak \frac{2^{k-1}}{2^{k-1}-1}%
\sum_{n=1}^{\infty }\frac{H_{n}^{(k-1)}}{n2^{n}},$ where $H_{n}^{(k)}$ are
defined below generalized harmonic numbers. Further we find generating
function of the numbers $\hat{\zeta}(k)=\sum_{j=1}^{\infty }(-1)^{j-1}/j^{k}.
$
\end{abstract}

\maketitle

\section{Introduction}

First let us recall basic notions and definitions that we will work with. By
the Hurwitz function $\zeta (s,\alpha )$ we will mean:%
\begin{equation*}
\zeta (s,\alpha )\allowbreak =\allowbreak \sum_{j=0}^{\infty }\frac{1}{%
(j+\alpha )^{s}},
\end{equation*}%
considered for $\func{Re}s>1,$ $\func{Re}\alpha \in (0,1].$ Function $\zeta
(s,1)$ is called Riemann zeta function. We will denote it also by $\zeta (s)$
if it will not cause misunderstanding. It turns out that both these
functions can be extended to holomorphic functions of $s$ on the whole
complex plane except $s=1$ where a single pole exists. Of great help in
doing so is the formula 
\begin{equation}
\zeta (s)\allowbreak =\allowbreak \frac{2^{s-1}}{2^{s-1}-1}%
\sum_{j=1}^{\infty }\frac{(-1)^{j-1}}{j^{s}},  \label{rz}
\end{equation}%
that enables to extend Riemann zeta function to the whole half plane $\func{%
Re}s>0$.

We will consider numbers:%
\begin{equation*}
\mathbb{M}_{k}^{(m,i)}\allowbreak =\allowbreak \sum_{j=0}^{\infty }\frac{%
(-1)^{j}}{(mj+i)^{k}},
\end{equation*}%
for $m\in \mathbb{N}$ and $i\in \left\{ 1,\ldots ,m-1\right\} .$ Notice that 
$\mathbb{M}_{k}^{(1,1)}\allowbreak =\allowbreak \sum_{j=1}^{\infty
}(-1)^{j-1}/j^{k}$ and $\mathbb{M}_{1}^{(2,1)}\allowbreak =\allowbreak \pi
/4.$ The number $\mathbb{M}_{2}^{(2,1)}\allowbreak =\allowbreak
\sum_{j=0}^{\infty }\frac{(-1)^{j}}{(2j+1)^{2}}$ is called Catalan constant $%
\mathbb{K}.$

It is elementary to notice that 
\begin{equation*}
\mathbb{M}_{k}^{(m,i)}\allowbreak =\allowbreak \frac{1}{(2m)^{k}}(\zeta
(k,i/(2m))-\zeta (k,1/2+i/(2m)).
\end{equation*}

The main idea of this paper is to apply the so called Euler transformation
that was nicely recalled in \cite{snow94}. As pointed out there we have:%
\begin{equation*}
\sum_{k=1}^{\infty }(-1)^{k-1}a_{k}\allowbreak =\allowbreak
\sum_{n=0}^{\infty }\Delta ^{n}a_{1}/2^{n+1},
\end{equation*}%
where $\left\{ a_{k}\right\} _{k\geq 1}$ is a sequence of complex numbers
and the sequence $\Delta ^{n}a_{k}$ is defined recursively: $\Delta
^{0}a_{k}\allowbreak =\allowbreak a_{k,}$ $\Delta ^{n}a_{k}\allowbreak
=\allowbreak \Delta ^{n-1}a_{k}\allowbreak -\allowbreak \Delta
^{n-1}a_{k+1}\allowbreak \allowbreak =\allowbreak \sum_{m=0}^{n}(-1)^{m}%
\binom{n}{m}a_{m+k}.$ Sondow in \cite{snow94} presented general idea of
applying Euler transformation to Riemann function. He however stopped half
way in the sense that he calculated finite differences $\Delta ^{n}$ applied
to $(j+1)^{-s}$ only for $s$ being negative integers. We are going to make a
few steps further and calculate these differences pointing out the r\^{o}le
of the generalized harmonic numbers in those calculations.

The paper is organized as follows. In the next section \ref{s_eul} we
present an auxiliary result that enables application of Euler transformation
to the analyzed series. Further we present transformed series approximating
numbers $\mathbb{M}_{k}^{(m,i)}$.  In Section \ref{s_gen} we calculate
generating functions of certain series of numbers and functions. More
precisely we calculate generating functions of the generalized harmonic
numbers that we have defined in the previous section. We also calculate
generating function of the series of the generating functions that were
defined previously. It turns out that this calculation enables to obtain the
generating function of the series sums that appear on the right hand side of
(\ref{rz}). Finally in the last Section \ref{remark} are collected cases
when exact values of numbers $\mathbb{M}$ are known.

\section{Euler transformation\label{s_eul}}

To proceed further we need the following result.

\begin{proposition}
\label{euler}Let us denote $A_{n,k}^{(m,i)}\allowbreak =\allowbreak
\sum_{j=0}^{n}(-1)^{j}\binom{n}{j}/(mj+i)^{k}$, $n\allowbreak =\allowbreak
0,1,\ldots ,$ and the family of sequences defined recursively: $%
B_{n,0}^{(m,i)}\allowbreak =\allowbreak 1,$ $B_{0,k}\allowbreak =\allowbreak 
\frac{1}{i^{k-1}},$ $k\geq 1,$ $\forall n,k\geq 0:B_{n,k}^{(m,i)}\allowbreak
=\allowbreak \allowbreak \sum_{j=0}^{n}\frac{1}{(mj+i)}B_{j,k-1}^{(m,i)}$.
We have then:\newline
$\forall $ $m\in \mathbb{N}$ : $A_{0,0}^{(m,i)}\allowbreak =1$, $\allowbreak
A_{n,0}^{(m,i)}\allowbreak =0,$ $\allowbreak A_{n,1}^{(m,i)}\allowbreak =%
\frac{n!}{m(i/m)_{n+1}},$ where \newline
$(a)_{n}\allowbreak =\allowbreak a(a+1)\ldots (a+n-1)$ is the so called
'rising factorial'. \newline
$\forall $ $n\geq 0,k\geq 1$ we get: 
\begin{equation*}
A_{n,k}^{(m,i)}\allowbreak =\allowbreak \frac{n!}{m(i/m)_{n+1}}%
B_{n,k-1}^{(m,i)}.
\end{equation*}
\end{proposition}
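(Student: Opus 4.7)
The plan decomposes into three steps. The $k=0$ statements are immediate from the binomial theorem: $A_{0,0}^{(m,i)}=1$, and $A_{n,0}^{(m,i)}=(1-1)^{n}=0$ for $n\ge 1$. For $k\ge 1$ I would handle all values of $k$ at once via a single generating-function identity, avoiding any separate induction on $k$ or $n$.

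Introduce the formal series
\[
G(t)\;=\;\sum_{j=0}^{n}\frac{(-1)^{j}\binom{n}{j}}{mj+i-t}.
\]
Expanding each summand as a geometric series in $t$ yields $G(t)=\sum_{k\ge 0}t^{k}A_{n,k+1}^{(m,i)}$. On the other hand, the classical partial-fraction identity
\[
\sum_{j=0}^{n}\frac{(-1)^{j}\binom{n}{j}}{x+j}\;=\;\frac{n!}{(x)_{n+1}},
\]
obtained by reading off the residues of $n!/\prod_{j=0}^{n}(x+j)$ at its simple poles, applied with $x=(i-t)/m$ gives
\[
G(t)\;=\;\frac{n!\,m^{n}}{\prod_{j=0}^{n}(mj+i-t)}\;=\;\frac{n!}{m(i/m)_{n+1}}\prod_{j=0}^{n}\frac{1}{1-t/(mj+i)}.
\]
Specialization at $t=0$ already recovers the stated $k=1$ formula $A_{n,1}^{(m,i)}=n!/[m(i/m)_{n+1}]$.

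Next, I would expand the right-hand product as the generating function of complete homogeneous symmetric polynomials in $x_{j}=1/(mj+i)$:
\[
\prod_{j=0}^{n}\frac{1}{1-tx_{j}}\;=\;\sum_{k\ge 0}t^{k}\,h_{k}(x_{0},\dots,x_{n}),\qquad h_{k}(x_{0},\dots,x_{n})=\sum_{0\le j_{1}\le\cdots\le j_{k}\le n}x_{j_{1}}\cdots x_{j_{k}}.
\]
Matching coefficients of $t^{k}$ in the two expressions for $G(t)$ produces
\[
A_{n,k+1}^{(m,i)}\;=\;\frac{n!}{m(i/m)_{n+1}}\,h_{k}\!\left(\tfrac{1}{i},\tfrac{1}{m+i},\dots,\tfrac{1}{mn+i}\right).
\]

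Finally I would verify the identification $B_{n,k}^{(m,i)}=h_{k}(1/i,\,1/(m+i),\dots,1/(mn+i))$. Classifying a weakly increasing $k$-tuple $(j_{1},\dots,j_{k})$ by its largest element $j_{k}$ yields the standard recursion $h_{k}(x_{0},\dots,x_{n})=\sum_{j=0}^{n}x_{j}\,h_{k-1}(x_{0},\dots,x_{j})$, which under $x_{j}=1/(mj+i)$ becomes precisely the defining recursion $B_{n,k}^{(m,i)}=\sum_{j=0}^{n}B_{j,k-1}^{(m,i)}/(mj+i)$ of the proposition; together with matching initial data $h_{0}=1=B_{n,0}^{(m,i)}$, a one-line induction on $k$ closes the identification. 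The only slightly subtle step is this last bookkeeping: once one recognises the $B$-recursion as the classical $h$-recursion, the body of the argument reduces to a single geometric-series expansion combined with the residue identity.
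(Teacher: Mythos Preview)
Your argument is correct and takes a genuinely different route from the paper's own proof. The paper proceeds by a direct binomial manipulation to obtain the two-index recursion
\[
A_{n+1,k}^{(m,i)}-\frac{m(n+1)}{m(n+1)+i}\,A_{n,k}^{(m,i)}=\frac{1}{m(n+1)+i}\,A_{n+1,k-1}^{(m,i)},
\]
solves it for $k=1$, and then normalises by $A_{n,1}^{(m,i)}$ to extract the $B$-recursion. Your approach instead packages all $k$ simultaneously in the generating function $G(t)=\sum_{j=0}^{n}(-1)^{j}\binom{n}{j}/(mj+i-t)$, evaluates it in closed form via the classical partial-fraction identity $\sum_{j}(-1)^{j}\binom{n}{j}/(x+j)=n!/(x)_{n+1}$, and then reads off the coefficients as complete homogeneous symmetric polynomials $h_{k}$ in the reciprocals $1/(mj+i)$. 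The identification $B_{n,k}^{(m,i)}=h_{k}$ is then immediate from the standard ``largest part'' recursion for $h_{k}$. Your route is more conceptual and makes explicit the symmetric-function structure that the paper only mentions in a later remark; on the other hand, the paper's recursion has the advantage of remaining valid for arbitrary complex exponents $s$ in place of the integer $k$, a fact the paper exploits in a subsequent remark and which your power-series expansion in $t$ does not directly deliver.
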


\begin{proof}
i) The fact that $A_{n,0}\allowbreak =\allowbreak 0$ follows immediately
properties of binomial coefficients. Notice that we have 
\begin{gather*}
A_{n+1,k}^{(m,i)}-\frac{m(n+1)}{m(n+1)+i}A_{n,k}^{(m,i)}=%
\sum_{j=0}^{n}(-1)^{j}\binom{n+1}{j}/(mj+i)^{k}+\frac{(-1)^{n+1}}{\left(
m(n+1)+i\right) ^{k}} \\
-\frac{m(n+1)}{m(n++1)+i}\sum_{j=0}^{n}(-1)^{j}\binom{n}{j}/(mj+i)^{k}=\frac{%
(-1)^{n+1}}{\left( m(n+1)+i\right) ^{k}} \\
+\sum_{j=0}^{n}(-1)^{j}(\binom{n+1}{j}-\frac{m(n+1)!}{j!(n-j)!(m(n+1)+i)}%
)/(mj+i)^{k} \\
=\frac{(-1)^{n+1}}{\left( m(n+1)+i\right) ^{k}}+\frac{1}{m(n+1)+j}%
\sum_{j=0}^{n}(-1)^{j}\binom{n+1}{j}/(mj+i)^{k-1} \\
=\frac{1}{m(n+1)+i}\sum_{j=0}^{n+1}(-1)^{j}\binom{n+1}{j}/(mj+i)^{k-1}%
\allowbreak =\allowbreak \frac{1}{m(n+1)+i}A_{n+1,k-1}^{(m,i)}
\end{gather*}%
since $(1-\frac{m(n+1-j)}{(mn+m+i)})\allowbreak =\frac{jm+i}{m+mn+i}$. Now
notice that we have $A_{n+1,1}^{(m.i)}-\frac{m(n+1)}{m(n+1)+i}%
A_{n,1}^{(m,i)}\allowbreak =\allowbreak 0$ from which immediately follows
that $A_{n,1}^{(m,i)}\allowbreak =\allowbreak \frac{n!}{m(i/m)_{n+1}}$ since 
$A_{0,1}^{(m,i)}\allowbreak =\allowbreak \frac{1}{i}$. Now divide both sides
of the identity $A_{n+1,k}^{(m,i)}-\frac{m(n+1)}{mn+m+i}A_{n,k}^{(m,i)}%
\allowbreak =\allowbreak \frac{1}{mn+m+i}A_{n+1,k-1}^{(m,i)}$ by $%
A_{n+1,1}^{(m,i)}$ and denote $B_{n,k}^{(m,i)}\allowbreak =\allowbreak
A_{n,k}^{(m,i)}/A_{n,1}^{(m,i)}.$ We get $B_{n+1,k}^{(m,i)}-B_{n,k}^{(m,i)}%
\allowbreak =\allowbreak \frac{1}{m(n+1)+i}B_{n+1,k-1}^{(m,i)}$ since $%
A_{n+1,1}^{(m.i)}\allowbreak =\allowbreak \frac{m(n+1)}{mn+m+i}%
A_{n,1}^{(m,i)}.$ Hence $B_{n,k}^{(m,i)}\allowbreak =\allowbreak
\sum_{j=0}^{n}\frac{1}{mj+i}B_{j,k-1}^{(m,i)}$ since $\forall k\geq
1:B_{0,k}^{(n,i)}\allowbreak =\allowbreak 1/i^{k-1}.$
\end{proof}

\begin{remark}
In the literature (compare e.g. \cite{choi2011}, \cite{choi2014}, \cite%
{Kron12}) there function notions of harmonic and generalized harmonic
numbers defined by $h_{n}^{(k)}\allowbreak =\allowbreak
\sum_{j=1}^{n}1/j^{k} $, $n\geq 1.$ Numbers $h_{n}^{(1)}$ are called simply
(ordinary) harmonic numbers.
\end{remark}

We are going to define differently generalized harmonic numbers.

\begin{definition}
\label{harm}For every $k\in \mathbb{N}$ numbers $\left\{ H_{n}^{(k)}\right\}
_{n\geq 1,k\geq 0}$ defined recursively by $H_{n}^{(0)}\allowbreak
=\allowbreak 1,$ $H_{n}^{(k)}\allowbreak =\allowbreak
\sum_{j=1}^{n}H_{j}^{(k-1)}/j,$ $n\geq 1$ will be called generalized
harmonic numbers of order $k.$
\end{definition}

\begin{remark}
It is easy to see that $B_{n,k}^{(1,1)}\allowbreak =\allowbreak
H_{n+1}^{(k)} $ and that $H_{n}^{(1)}$ is an ordinary $n-$th harmonic number.
\end{remark}

\begin{remark}
Notice that $H_{n}^{(k)}$ is a symmetric function of order $k$ of the
numbers $\{1,1/2,\ldots ,1/n\}$ hence it can be expressed as a linear
combination of some other symmetric functions of order less or equal $k.$
For example we have: $H_{n}^{(1)}\allowbreak =\allowbreak
h_{n}^{(1)}\allowbreak =\allowbreak H_{n}$ (the ordinary harmonic number), $%
H_{n}^{(2)}\allowbreak =\allowbreak H_{n}^{2}/2\allowbreak +\allowbreak
h_{n}^{(2)}/2,$ $H_{n}^{(3)}\allowbreak =\allowbreak H_{n}^{3}/6\allowbreak
+\allowbreak H_{n}h_{n}^{(2)}/2\allowbreak +\allowbreak h_{n}^{(3)}/3$ and
so on.
\end{remark}

\begin{remark}
Notice also that recursive equation that was obtained in the proof of
Proposition \ref{euler} i.e.%
\begin{equation*}
A_{n+1,k}^{(m,i)}-\frac{m(n+1)}{m(n+1)+i}A_{n,k}^{(m,i)}\allowbreak
=\allowbreak \frac{1}{m(n+1)+i}A_{n+1,k-1},
\end{equation*}%
is valid also for $k\allowbreak =\allowbreak 0,-1,-2,\ldots .$ . Of course
then we apply it in the following form:%
\begin{equation*}
A_{n+1,k-1}=(m(n+1)+i)A_{n+1,k}\allowbreak -\allowbreak
m(n+1)A_{n,k}^{(m,i)}\allowbreak ,
\end{equation*}%
getting for example : $A_{0,-1}^{(m,i)}\allowbreak =\allowbreak 1,$ $%
A_{1,-1}^{(m,i)}\allowbreak =\allowbreak -m,$ $A_{n,-1}^{(m,i)}\allowbreak
=\allowbreak 0,$ $A_{0,-2}^{(m,i)}\allowbreak =\allowbreak 1,$ $%
A_{1,-2}^{(m,i)}\allowbreak =\allowbreak -m(m+i+1),$ $A_{2,-2}^{(m,i)}%
\allowbreak =\allowbreak 2m^{2}$ , $A_{n,-2}^{(m,i)}\allowbreak =\allowbreak
0$ for $n=3,4,\ldots $ . The fact that $A_{n,-k}^{(m,i)}\allowbreak
=\allowbreak 0$ for $n\geq k+1$ was already noticed, justified and applied
by Sondow in \cite{snow94}.
\end{remark}

As a corollary we have the following result:

\begin{theorem}
\begin{equation}
\mathbb{M}_{k}^{(m,i)}\allowbreak =\allowbreak \sum_{n=0}^{\infty }\frac{n!}{%
2^{n+1}m(i/m)_{n+1}}B_{n,k-1}^{(m,i)},  \label{Mk}
\end{equation}%
where numbers $B_{n,k}^{(m,i)}$ are defined above.

i) In particular :%
\begin{equation}
\mathbb{M}_{2k+1}^{(2m,m)}=\frac{1}{m^{2k+1}}M_{2k+1}^{(2,1)}=\pi ^{2k+1}%
\frac{(-1)^{k}E_{2k}}{2(2m)^{2k+1}(2k)!},  \label{eul}
\end{equation}
\begin{equation}
\mathbb{M}_{2}^{(2,1)}\allowbreak =\allowbreak \mathbb{K\allowbreak
=\allowbreak }\sum_{n=0}^{\infty }\frac{n!(H_{2n+1}-H_{n}/2)}{2(2n+1)!!},
\label{cat}
\end{equation}%
where $H_{n}$ denotes $n-th$ (ordinary) harmonic number.

ii) for $m\allowbreak =\allowbreak i\allowbreak =\allowbreak 1,$ $k\in 
\mathbb{N}\mathbf{:}$%
\begin{equation}
\sum_{j=1}^{\infty }\frac{(-1)^{j-1}}{j^{k}}\allowbreak =\allowbreak
\sum_{n=1}^{\infty }\frac{H_{n}^{(k-1)}}{n2^{n}},  \label{riem1}
\end{equation}%
and consequently for $k\allowbreak =\allowbreak 2,3,\ldots $%
\begin{equation}
\zeta (k)\allowbreak =\allowbreak \frac{2^{k-1}}{2^{k-1}-1}%
\sum_{n=1}^{\infty }\frac{H_{n}^{(k-1)}}{n2^{n}}.  \label{riem}
\end{equation}
\end{theorem}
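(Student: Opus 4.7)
The plan is to derive everything as a straightforward corollary of Proposition \ref{euler} together with the Euler transformation recalled in the introduction. Writing $\mathbb{M}_{k}^{(m,i)}=\sum_{j=0}^{\infty }(-1)^{j}/(mj+i)^{k}$ as $\sum_{j=1}^{\infty }(-1)^{j-1}c_{j}$ with $c_{j}=1/(m(j-1)+i)^{k}$, I observe that the $n$-th forward difference at $j=1$ is exactly
\begin{equation*}
\Delta ^{n}c_{1}=\sum_{l=0}^{n}(-1)^{l}\binom{n}{l}\frac{1}{(ml+i)^{k}}=A_{n,k}^{(m,i)}.
\end{equation*}
Hence Euler transformation gives $\mathbb{M}_{k}^{(m,i)}=\sum_{n=0}^{\infty }A_{n,k}^{(m,i)}/2^{n+1}$, and plugging in the closed form $A_{n,k}^{(m,i)}=\frac{n!}{m(i/m)_{n+1}}B_{n,k-1}^{(m,i)}$ from Proposition \ref{euler} yields (\ref{Mk}) immediately.

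For part (ii), I would specialize (\ref{Mk}) to $m=i=1$. Since $(1)_{n+1}=(n+1)!$, the prefactor collapses to $1/((n+1)2^{n+1})$; combined with the remark that $B_{n,k}^{(1,1)}=H_{n+1}^{(k)}$, the sum reindexes to $\sum_{n=1}^{\infty }H_{n}^{(k-1)}/(n2^{n})$, which is (\ref{riem1}). The formula (\ref{riem}) then follows by multiplying through by $2^{k-1}/(2^{k-1}-1)$, using (\ref{rz}).

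For the Catalan constant (\ref{cat}), I take $m=2$, $i=1$, $k=2$. A short Pochhammer calculation gives $(1/2)_{n+1}=(2n+1)!!/2^{n+1}$, so the coefficient in (\ref{Mk}) simplifies to $n!/\bigl(2(2n+1)!!\bigr)$. From the recursion defining $B$, one finds $B_{n,1}^{(2,1)}=\sum_{j=0}^{n}1/(2j+1)$; splitting $H_{2n+1}$ into odd and even parts identifies this with $H_{2n+1}-H_{n}/2$, producing (\ref{cat}). The identity (\ref{eul}) is essentially the classical closed form $\sum_{j=0}^{\infty }(-1)^{j}/(2j+1)^{2k+1}=(-1)^{k}E_{2k}(\pi /2)^{2k+1}/(2(2k)!)$ after factoring $m^{2k+1}$ out of the denominators, and does not require the machinery of this paper.

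The only real obstacle is justifying the applicability of Euler's transformation: for $k\geq 2$ the original series is absolutely convergent and the transformation is trivially valid, but for the $k=1$ and $\pi/4$ case it is only conditionally convergent, so one must invoke the classical version for alternating series with decreasing positive terms. Beyond that, the proof is essentially index bookkeeping, plus the standard Pochhammer and harmonic-number identities indicated above.
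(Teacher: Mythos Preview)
Your proposal is correct and follows essentially the same route as the paper: apply Euler's transformation to identify the transformed terms with $A_{n,k}^{(m,i)}$, invoke Proposition~\ref{euler} to obtain (\ref{Mk}), and then specialize the parameters and simplify the Pochhammer and $B$-numbers exactly as you describe for (\ref{cat}), (\ref{eul}), (\ref{riem1}), and (\ref{riem}). Your treatment is in fact slightly more careful than the paper's (you flag the conditional-convergence issue for $k=1$, and you give the correct identity $\sum_{j=0}^{n}1/(2j+1)=H_{2n+1}-H_{n}/2$, whereas the paper's proof contains a typo at that step).
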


\begin{proof}
Applying Euler transformation to the series $\mathbb{M}_{n,k}^{(m,i)}$ we
have 
\begin{equation*}
\mathbb{M}_{n,k}^{(m,i)}\allowbreak =\allowbreak \sum_{n=0}^{\infty
}A_{n,k}^{(m,i)}/2^{n+1}.
\end{equation*}%
Now it remains to apply Proposition \ref{euler}. i) To see that (\ref{Mk})
reduces to (\ref{cat}) when $k\allowbreak =\allowbreak 2,$ $m\allowbreak
=\allowbreak 2$ and $i\allowbreak =\allowbreak 1$ notice that $%
B_{n,0}^{(2,1)}\allowbreak =\allowbreak 1$ and consequently $%
B_{n,1}^{(2,1)}\allowbreak =\allowbreak \sum_{j=0}^{n}1/(2j+1)\allowbreak
=\allowbreak H_{2n+1}-2H_{n}.$ Further we have $(1/2)_{n+1}\allowbreak
=\allowbreak \prod_{j=0}^{n}(j+1/2)\allowbreak =\allowbreak (2n+1)!!/2^{n+1}.
$ To justify (\ref{eul}) we have to observe that $2\mathbb{M}%
_{2k+1}^{(2m,m)}\allowbreak =\allowbreak \hat{S}(2k+1,2m,m)\allowbreak
=\allowbreak \sum_{j=-\infty }^{\infty }\frac{(-1)^{j}}{(j2m+m)^{2k+1}}%
\allowbreak =\allowbreak \frac{1}{m^{2k+1}}\sum_{j=-\infty }^{\infty }\frac{%
(-1)^{j}}{(j2+1)^{2k+1}}.$ The fact that $\sum_{j=-\infty }^{\infty }\frac{%
(-1)^{j}}{(j2+1)^{2k+1}}\allowbreak =\allowbreak \pi ^{2k+1}(-1)^{k}\frac{%
E_{2k}}{2^{2k+1}(2k)!}$ dates back to Euler and was recalled in \cite{Szab14}%
.

ii) If $m\allowbreak =\allowbreak i\allowbreak =\allowbreak 1$ we have $%
(1)_{n+1}\allowbreak =\allowbreak (n+1)!.$ Recall also that then $%
B_{n,k}^{(m,i)}\allowbreak =\allowbreak H_{n+1}^{(k)}.$  (\ref{riem})
follows additionally (\ref{rz}).
\end{proof}

\begin{remark}
Notice that when $i\allowbreak =\allowbreak 1$ then the sequence $\left\{
B_{n,k}^{(m,1)}\right\} $ is generated by the recursion: $%
B_{n,0}^{(m,1)}\allowbreak =\allowbreak 1,$ $B_{n,k}^{(m,1)}\allowbreak
=\allowbreak \sum_{j=0}^{n}B_{n.k-1}^{(m,1)}/(mj+1).$ Now arguing by
induction we see that $\forall n\geq 0$: $B_{n,k}^{(m,1)}\geq
B_{n,k-1}^{(m,1)}.$ Consequently we deduce that the sequence $\left\{ 
\mathbb{M}_{k}^{(m,1)}\right\} _{k\geq 1}$ is increasing which is not so
obvious when considering only definition of these numbers. It is also
elementary to notice that 
\begin{equation*}
\lim_{k\longrightarrow \infty }\mathbb{M}_{k}^{(m,1)}\allowbreak
=\allowbreak 1.
\end{equation*}%
In particular we deduce that the sequence $\{\zeta (k)(1-1/2^{k-1})\}_{k>1}$
is increasing.
\end{remark}

\begin{remark}
Notice that one can easily prove (by induction) that $\forall n,k\in \mathbb{%
N}:1\leq H_{n}^{(k)}\leq n.$ Hence, utilizing (\ref{riem}) we have:%
\begin{equation*}
\frac{\ln 2-1/2}{2^{m+1}(m+1)}\leq \left\vert \zeta (k)-\frac{2^{k-1}}{%
2^{k-1}-1}\sum_{n=0}^{m}\frac{H_{n+1}^{(k-1)}}{2^{n+1}(n+1)}\right\vert \leq 
\frac{1}{2^{m+1}},
\end{equation*}%
since $\frac{2^{k-1}}{2^{k-1}-1}\leq 2$ for $k\leq 2$ and further $%
\left\vert \zeta (k)-\frac{2^{k-1}}{2^{k-1}-1}\sum_{n=0}^{m}\frac{%
H_{n+1}^{(k-1)}}{2^{n+1}(n+1)}\right\vert \allowbreak \leq $\newline
$\allowbreak \frac{2^{k-1}}{2^{k-1}-1}\sum_{n=m+1}^{\infty
}1/2^{n+1}\allowbreak \leq \allowbreak \frac{2^{k-1}}{2^{k-1}-1}/2^{m+2}$
and $\frac{m+1}{n+1}\geq \frac{1}{n-m+1}$ and $\sum_{n=m+1}^{\infty }\frac{1%
}{2^{n+1}(n+1)}\allowbreak \geq \allowbreak \frac{1}{2^{m+1}(m+1)}%
\sum_{n=m+1}^{\infty }\frac{1}{2^{n-m+1}(n-m+1)}\allowbreak =\allowbreak 
\frac{\ln 2-1/2}{2^{m+1}(m+1)}$.
\end{remark}

\begin{remark}
Formulae (\ref{Mk}) and (\ref{riem}) can be considered as a series
transformation to speed up its convergence. Apery for $\zeta (3)$ in his
breakthrough paper and later Hessami Pilehrood et al. in \cite{hess11}
obtained series transformations to speedup series appearing in the
definitions of Riemann or Hurwitz zeta functions. As it is remarked in \cite%
{hess11} all these transformation give series more or less of the form $%
c_{n}/4^{n}$ where $c_{n}\allowbreak =\allowbreak O(1),$ but for different
arguments of $\zeta $ one gets very different series in a very different,
particular way. Apery's one is one of the simplest. Formulae (\ref{Mk}) and (%
\ref{riem}) offer unified form of the transformed series and speed of
convergence is only slightly worse. Namely of the form $c_{n}/2^{n}.$
\end{remark}

\begin{remark}
Notice also that analyzing the proof of Proposition \ref{euler} we can
formulate the following observation. Let us denote $A_{n,s}^{(m,l)}%
\allowbreak =\allowbreak \sum_{j=0}^{n}(-1)^{j}\binom{n}{j}/(mj+l)^{s}$ for $%
\func{Re}(s)>0.$ Then 
\begin{equation*}
A_{n+1,s}^{(m,l)}-\frac{m(n+1)}{m(n+1)+l}A_{n,s}^{(m,l)}\allowbreak
=\allowbreak \frac{1}{m(n+1)+l}A_{n+1,s-1}^{(m,l)}.
\end{equation*}%
Hence keeping in mind that $A_{0,s}^{(m,l)}\allowbreak =\allowbreak 1/l^{s}$
and assuming that we know numbers $\left\{ A_{n,s-1}^{(m,l)}\right\} _{n\geq
0}$ we are able to get numbers $\left\{ A_{n,s}^{(m,l)}\right\} _{n\geq 0}$
and consequently find $\zeta (s,l/m).$

In particular if $m\allowbreak =\allowbreak l\allowbreak =\allowbreak 1$ we
get $A_{n+1,s}\allowbreak -\allowbreak \frac{n+1}{n+2}\allowbreak
A_{n,s}=\allowbreak \frac{1}{n+2}A_{n+1,s-1}$ where we denoted $%
A_{n,s}\allowbreak =\allowbreak A_{n,s}^{(1,1)}$ to simplify notation.
Consequently we deduce that $A_{n,s}\allowbreak =\allowbreak \frac{1}{n+1}%
\sum_{j=1}^{n}A_{j,s-1}.$ Since we can iterate this relationship we see that
the knowledge of functions $A_{n,s}$ for $\func{Re}(s)\in (0,1]$ implies
knowledge of these functions for $s$ with $\func{Re}(s)>0.$
\end{remark}

\section{Generating functions and integral representation Riemann zeta
functions at integer values\label{s_gen}}

Let us denote by $f_{n}(x)$ the generating function of numbers $\left\{
H_{j}^{(n)}\right\} _{j=0}^{\infty }$ i.e. $f_{n}(x)\allowbreak =\allowbreak
\sum_{j=0}^{\infty }x^{j}H_{j+1}^{(n)}.$ We have the following simple
observation:

\begin{proposition}
i) $\forall x\in (-1,1):$ $f_{-1}(x)\allowbreak =\allowbreak 1,$ $%
f_{0}(x)\allowbreak =\allowbreak 1/(1-x):$%
\begin{equation}
f_{n}(x)\allowbreak =\allowbreak \frac{1}{x(1-x)}\int_{0}^{x}f_{n-1}(y)dy,
\label{gen}
\end{equation}%
$n\geq 1.$

ii) Let us denote $Q(x,y)$ the generating function of function series $%
\left\{ f_{n}\right\} _{n\geq 0}$ i.e. $Q(x,y)\allowbreak =\allowbreak
\sum_{j=0}^{\infty }y^{j}f_{j}(x),$ for $y\in (-1,1).$ We have 
\begin{equation}
Q(x,y)\allowbreak =\allowbreak \frac{B(x,1-y,1+y)}{x^{1-y}(1-x)^{1+y}},
\label{gen1}
\end{equation}%
where $B(x,a,b)$ denotes incomplete beta function.
\end{proposition}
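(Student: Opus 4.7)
The plan is to treat part~(i) by direct manipulation of the defining series and part~(ii) by deriving and solving a linear integral equation for $Q$. For~(i), the base case $f_{0}(x)=1/(1-x)$ is immediate from $H_{j}^{(0)}=1$, while the convention $f_{-1}(x)=1$ is chosen so that the recursion~(\ref{gen}) remains valid also at $n=0$. For the inductive step I would substitute $H_{j+1}^{(n)}=\sum_{k=1}^{j+1}H_{k}^{(n-1)}/k$ from Definition~\ref{harm} into the series for $f_{n}(x)$, swap the order of summation, and collapse the inner geometric series to obtain
\begin{equation*}
f_{n}(x)=\sum_{k=1}^{\infty}\frac{H_{k}^{(n-1)}}{k}\sum_{j=k-1}^{\infty}x^{j}=\frac{1}{x(1-x)}\sum_{k=1}^{\infty}H_{k}^{(n-1)}\frac{x^{k}}{k}.
\end{equation*}
Termwise integration of $f_{n-1}(y)=\sum_{k\ge 1}y^{k-1}H_{k}^{(n-1)}$ shows that the last sum equals $\int_{0}^{x}f_{n-1}(y)\,dy$, which establishes~(\ref{gen}).

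For~(ii), I would multiply~(\ref{gen}) by $y^{n}$ and sum over $n\ge 1$; together with the value of $f_{0}$ this yields the integral equation
\begin{equation*}
Q(x,y)=\frac{1}{1-x}+\frac{y}{x(1-x)}\int_{0}^{x}Q(u,y)\,du.
\end{equation*}
Setting $F(x)=\int_{0}^{x}Q(u,y)\,du$ converts this into the first-order linear ODE
\begin{equation*}
F'(x)-\frac{y}{x(1-x)}F(x)=\frac{1}{1-x},\qquad F(0)=0.
\end{equation*}
Partial fractions give the integrating factor $\mu(x)=(1-x)^{y}/x^{y}$, whence $(\mu F)'=x^{-y}(1-x)^{y-1}$ and therefore $\mu(x)F(x)=B(x,1-y,y)$. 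Differentiating $F=\mu^{-1}B(x,1-y,y)$ and applying the product rule produces
\begin{equation*}
Q(x,y)=F'(x)=\frac{1}{1-x}+\frac{y\,x^{y-1}}{(1-x)^{y+1}}\,B(x,1-y,y).
\end{equation*}

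The main obstacle is reconciling this expression with the target form $B(x,1-y,1+y)/[x^{1-y}(1-x)^{1+y}]$. After clearing the common factor $x^{y-1}(1-x)^{-y-1}$, this reduces to the incomplete beta identity
\begin{equation*}
B(x,1-y,1+y)=x^{1-y}(1-x)^{y}+y\,B(x,1-y,y).
\end{equation*}
I would prove it in two steps. First, writing $(1-u)^{y}=(1-u)(1-u)^{y-1}$ inside the integral defining $B(x,1-y,1+y)$ gives $B(x,2-y,y)=B(x,1-y,y)-B(x,1-y,1+y)$. Second, integration by parts on $B(x,1-y,1+y)=\int_{0}^{x}u^{-y}(1-u)^{y}\,du$ with $U=(1-u)^{y}$ and $dV=u^{-y}\,du$ produces $(1-y)B(x,1-y,1+y)=x^{1-y}(1-x)^{y}+y\,B(x,2-y,y)$. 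Eliminating $B(x,2-y,y)$ between these two relations yields the required identity and completes the proof.
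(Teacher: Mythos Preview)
Your proof is correct. Part~(i) is essentially identical to the paper's argument.

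For part~(ii) you and the paper start from the same integral equation $x(1-x)Q(x,y)=x+y\int_{0}^{x}Q(z,y)\,dz$, but then take different routes. The paper differentiates this relation in $x$ to obtain a first-order linear ODE directly for $Q$, namely $x(1-x)Q'+(1-2x-y)Q=1$; the integrating factor $x^{1-y}(1-x)^{1+y}$ then produces $B(x,1-y,1+y)$ immediately, and the integration constant is fixed by $Q(0,y)=1/(1-y)$. You instead set $F=\int_{0}^{x}Q$, solve the ODE for $F$ (which naturally yields $B(x,1-y,y)$ rather than $B(x,1-y,1+y)$), differentiate back to recover $Q$, and are then forced to prove the auxiliary incomplete-beta identity $B(x,1-y,1+y)=x^{1-y}(1-x)^{y}+y\,B(x,1-y,y)$ to match the stated form. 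Both arguments are valid; the paper's is one step shorter and avoids the extra identity, while yours has the minor advantage that the initial condition $F(0)=0$ requires no asymptotic analysis near $x=0$.
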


\begin{proof}
i) We have $f_{n}(x)\allowbreak =\allowbreak \sum_{j=1}^{\infty
}x^{j-1}H_{j}^{(n)}\allowbreak =\allowbreak \sum_{j=1}^{\infty
}x^{j-1}\sum_{k=1}^{j}H_{k}^{(n-1)}/k\allowbreak =$\newline
$\allowbreak \sum_{k=1}^{\infty }H_{k}^{(n-1)}/k\sum_{j=k}^{\infty
}x^{j-1}\allowbreak =\allowbreak \frac{1}{1-x}%
\sum_{k=1}x^{k-1}H_{k}^{(n-1)}/k\allowbreak =\allowbreak $\newline
$\allowbreak \frac{1}{x(1-x)}\sum_{k=1}^{\infty
}H_{k}^{(n-1)}\int_{0}^{s}y^{k-1}dy\allowbreak =\allowbreak \allowbreak 
\frac{1}{x(1-x)}\int_{0}^{x}\sum_{k=1}^{\infty
}y^{k-1}H_{k}^{(n-1)}dy\allowbreak $\newline
$=\allowbreak \frac{1}{x(1-x)}\int_{0}^{x}f_{n-1}(y)dy.$

ii) We have: $(1-x)xQ(x,y)\allowbreak \allowbreak =\allowbreak
\sum_{j=0}^{\infty }y^{j}(1-x)xf_{j}(x)\allowbreak =\allowbreak
x+\sum_{j=1}^{\infty }y^{j}\int_{0}^{x}f_{j-1}(z)dz\allowbreak =$\newline
$\allowbreak x+\allowbreak \int_{0}^{x}\sum_{j=1}^{\infty
}y^{j}f_{j-1}(z)dz)\allowbreak =x+\allowbreak y\int_{0}^{x}Q(z,y))dz.$
Differentiating with respect to $x$ we get: $(1-2x)Q(x,y)\allowbreak
+\allowbreak x(1-x)Q^{\prime }(x,y)\allowbreak \allowbreak \allowbreak
=\allowbreak 1+yQ(x,y).$ Now solving this differential equation we get $%
Q(x,y)\allowbreak =\allowbreak \frac{Beta(x,1-y,1+y)-C(y)}{x^{1-y}(1-x)^{1+y}%
}\allowbreak .$ Recalling that $Q(0,y)\allowbreak =\allowbreak 1/(1-y)$ we
see $C(y)\allowbreak =\allowbreak 0$.
\end{proof}

Let us denote for simplicity $\hat{\zeta}(s)\allowbreak \overset{df}{=}%
\allowbreak \sum_{j=1}^{\infty }(-1)^{j}/j^{s}$ for $\func{Re}(s)>0.$ Notice
that following (\ref{riem1}) we have%
\begin{equation}
\hat{\zeta}(k)\allowbreak =\allowbreak \int_{0}^{1/2}f_{k-1}(x)dx\allowbreak
=\allowbreak \frac{1}{4}f_{k}(1/2),  \label{val}
\end{equation}%
for $k\allowbreak =\allowbreak 1,2,\ldots $ . 

We also have:%
\begin{equation*}
\sum_{j=0}^{\infty }y^{j}\hat{\zeta}(j)=B(1/2,1-y,1+y),
\end{equation*}%
for $y\in (-1,1)$ following (\ref{gen1}).

Recall that $\sum_{j=1}^{\infty }\zeta (2j)t^{2j}\allowbreak =\allowbreak
1-\pi t\cot (\pi t)$ hence $\sum_{j=1}^{\infty }\hat{\zeta}%
(2j)t^{2j}\allowbreak =\allowbreak \allowbreak \frac{\pi t}{\sin (\pi t)}%
\allowbreak -\allowbreak 1$ after some algebra.  Hence 
\begin{equation*}
\sum_{j=0}^{\infty }y^{2j+1}\hat{\zeta}(2j+1)\allowbreak =\allowbreak
B(1/2,1-y,1+y)+1-\frac{\pi y}{\sin (\pi y)},
\end{equation*}%
since $\hat{\zeta}(0)\allowbreak =\allowbreak 1/2.$ Let us remark that there
exist some expansions of incomplete beta function. Applying one of them one
can we have for example:%
\begin{equation*}
\sum_{j=0}^{\infty }y^{j}\hat{\zeta}(j)\allowbreak =\allowbreak
2^{y-1}\sum_{j=0}^{\infty }\frac{\left( -y\right) _{j}}{j!(j+1-y)2^{j}},
\end{equation*}%
for $y\in (0,1).$

\section{Remarks on particular values\label{remark}}

In \cite{Szab14} the sums of the form $S(n,k,l)\allowbreak =\allowbreak
\sum_{j=-\infty }^{\infty }\frac{1}{(jk+l)^{n}},~\hat{S}(n,k,l)=\sum_{j=-%
\infty }^{\infty }\frac{(-1)^{j}}{(jk+l)^{n}}$ were analyzed and some of
them were calculated. From the results of this paper it follows that the
following sums: 
\begin{equation*}
\mathbb{M}_{k}^{(m,i)}+(-1)^{k+1}\mathbb{M}_{k}^{(m,m-i)}
\end{equation*}%
have values of the form $\pi ^{k}$ times some know, analytic number. Notice
that this statement is trivial for $k$ odd, $m$ even and $i\allowbreak
=\allowbreak m/2.$ 

In particular we get for $k\allowbreak =\allowbreak 2l$ we have $\mathbb{M}%
_{2l}^{(m,i)}\allowbreak -\allowbreak \mathbb{M}_{2l}^{(m,m-i)}\allowbreak
=\allowbreak \frac{1}{m^{2l}}(\zeta (2l,l/(2m))\allowbreak -\allowbreak
\zeta (2l,(m+i)/(2m))\allowbreak \allowbreak -\allowbreak \mathbb{\zeta (}%
2l,(m-i)/(2m))\allowbreak +\allowbreak \zeta (2l,(2m-i)/(2m))\allowbreak
=\allowbreak \frac{1}{m^{2l}}(\zeta (2l,l/(2m))\allowbreak +\allowbreak
\allowbreak \zeta (2l,1-i/(2m))\allowbreak -\allowbreak \zeta
(2l,(m+i)/(2m)-\zeta (2l,(m-i)/(2m)).$

Following \cite{Szab14} we also have for $k\geq 1$:%
\begin{equation*}
S(2k,4,1)=\frac{1}{4^{2k}}(\zeta (2k,1/4)+\zeta (2k,3/4))\allowbreak
=\allowbreak \pi ^{2k}\frac{(2^{2k}-1)}{2(2k)!}(-1)^{k+1}B_{2k},
\end{equation*}%
where $B_{2k}$ denotes $2k-th$ Bernoulli number. In particular we have 
\begin{equation*}
16\mathbb{K=}(\zeta (2,1/4)-\zeta (2,3/4));(\zeta (2,1/4)+\zeta
(2,3/4))\allowbreak =\allowbreak 2\pi ^{2}.
\end{equation*}

Finally let us recall that $\zeta (2l,1)\allowbreak =\allowbreak \allowbreak
(-1)^{l+1}B_{2l}\frac{(2\pi )^{2l}}{2(2l)!}.$ Using formula (\ref{riem}) we
get:%
\begin{equation*}
\allowbreak (-1)^{l+1}B_{2l}\frac{(2\pi )^{2l}}{2(2l)!}\allowbreak
=\allowbreak \frac{2^{2l-1}}{2^{2l-1}-1}\sum_{n=1}^{\infty }\frac{%
H_{n}^{(2l-1)}}{n2^{n}},
\end{equation*}%
and consequently we obtain the following expansions of even powers of $\pi :$%
\begin{equation*}
\pi ^{2l}\allowbreak =\allowbreak (-1)^{l+1}\frac{(2l)!}{(2^{2l-1}-1)B_{2l}}%
\sum_{n=1}^{\infty }\frac{H_{n}^{(2l-1)}}{n2^{n}}.
\end{equation*}

\end{document}